\documentclass[11pt]{amsart}

\usepackage[T1]{fontenc}
\usepackage{lmodern}
\usepackage{microtype}
\usepackage[margin=1.6in]{geometry}

\usepackage[
    backend=biber,
    style=alphabetic,
    sorting=nyt,
    giveninits=true,
    doi=false,
    url=false,
    isbn=false,
    maxbibnames=99,
    maxalphanames=99
]{biblatex}

\AtEveryBibitem{%
  \clearfield{month}%
  \clearfield{day}%
}

\usepackage{amsmath,amssymb,amsfonts,mathtools}
\usepackage{amscd}
\usepackage{latexsym}
\usepackage{mathrsfs}
\usepackage{accents}
\usepackage{esint}
\usepackage{braket}

\usepackage{graphicx}
\usepackage{enumitem}

\usepackage{xcolor}
\usepackage{aliascnt}
\usepackage[
  colorlinks=true,
  linkcolor=blue,
  citecolor=blue,
  urlcolor=blue
]{hyperref}
\usepackage[nameinlink,noabbrev]{cleveref}

\hypersetup{
  pdftitle={Genus zero FBMS},
  pdfauthor={Otis Chodosh and Matilde Gianocca}
}

\setlist{itemsep=3pt}
\allowdisplaybreaks
\numberwithin{equation}{section}

\theoremstyle{plain}

\newtheorem{theorem}{Theorem}[section]

\newaliascnt{proposition}{theorem}
\newtheorem{proposition}[proposition]{Proposition}
\aliascntresetthe{proposition}

\newaliascnt{lemma}{theorem}
\newtheorem{lemma}[lemma]{Lemma}
\aliascntresetthe{lemma}

\newaliascnt{corollary}{theorem}
\newtheorem{corollary}[corollary]{Corollary}
\aliascntresetthe{corollary}

\theoremstyle{remark}

\newaliascnt{remark}{theorem}
\newtheorem{remark}[remark]{Remark}
\aliascntresetthe{remark}

\crefname{theorem}{theorem}{theorems}
\Crefname{theorem}{Theorem}{Theorems}

\crefname{proposition}{proposition}{propositions}
\Crefname{proposition}{Proposition}{Propositions}

\crefname{lemma}{lemma}{lemmas}
\Crefname{lemma}{Lemma}{Lemmas}

\crefname{corollary}{corollary}{corollaries}
\Crefname{corollary}{Corollary}{Corollaries}

\crefname{remark}{remark}{remarks}
\Crefname{remark}{Remark}{Remarks}

\theoremstyle{plain}

\theoremstyle{definition}

\theoremstyle{remark}

\crefalias{theo}{theorem}
\crefalias{prop}{proposition}
\crefalias{lemm}{lemma}
\crefalias{coro}{corollary}
\crefalias{rema}{remark}

\newcommand{\BB}{\mathbb{B}}

\newcommand{\RR}{\mathbb{R}}

\newcommand{\R}{\mathbb{R}}

\DeclareMathOperator{\tr}{tr}

\let\oldmarginpar\marginpar
\renewcommand{\marginpar}[1]{%
  \oldmarginpar[\raggedleft\footnotesize #1]{\raggedright\footnotesize #1}}

\title{Free boundary minimal surfaces of genus zero}

\author[O. Chodosh]{Otis Chodosh}
\address{Department of Mathematics, Stanford University, Stanford, CA 94305, USA}
\email{ochodosh@stanford.edu}

\author[M. Gianocca]{Matilde Gianocca}
\address{Department of Mathematics, University of Chicago, Chicago IL 60637, USA}
\email{mgianocca@uchicago.edu}

\date{}

\begin{document}

\begin{abstract}
We prove that any embedded genus zero free boundary minimal surface in the ball has first Steklov eigenvalue $1$. This implies the uniqueness of the critical catenoid.
\end{abstract}

\maketitle

\section{Introduction}

Let $\Sigma\subset\BB^3$ be a smooth, compact, connected, properly embedded free-boundary minimal surface. Its Steklov eigenvalues are the numbers $\sigma$ for which there is a nonzero function satisfying
$$
\Delta f=0\quad\text{in }\Sigma,\qquad \partial_\eta f=\sigma f\quad\text{on }\partial\Sigma,
$$
where $\eta$ is the outward unit conormal. We prove that the first nonzero Steklov eigenvalue is equal to $1$ when $\Sigma$ has genus zero.

\begin{theorem}\label{thm:main}
If $\Sigma\subset\BB^3$ is a smooth properly embedded free-boundary minimal surface of genus zero, then $\sigma_1(\Sigma)=1$.
\end{theorem}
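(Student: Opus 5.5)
We follow the strategy of Choe's and the Fraser--Schoen approach, combined with a Courant-type nodal domain argument adapted to genus zero. Since coordinate functions $x_i|_\Sigma$ are Steklov eigenfunctions with eigenvalue $1$ (harmonic on a minimal surface, and $\partial_\eta x = x$ on $\partial\Sigma\subset\partial\BB^3$ by the free boundary condition), we already have $\sigma_1(\Sigma)\le 1$. It therefore suffices to show that no Steklov eigenfunction $u$ with eigenvalue $\sigma\in(0,1)$ exists. Suppose one does. A first eigenfunction has exactly two nodal domains (Courant's theorem for the Steklov problem). The plan is to produce a competitor built from the coordinate functions which contradicts this.

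\textbf{Step 1 (orthogonality via a linear function).} Consider the $3$-dimensional space of linear functions $\ell_a(x)=a\cdot x$ restricted to $\Sigma$. The map $a\mapsto \int_{\partial\Sigma} u\,\ell_a$ is linear from $\RR^3$ to $\RR$, so its kernel has dimension at least $2$. More usefully, for any point $p\in\Sigma$ we can choose $a\neq 0$ with $\ell_a$ chosen to vanish appropriately. The key geometric fact is that for genus zero, each nodal line of $\ell_a$ (a plane section $\Sigma\cap\{a\cdot x=0\}$) cuts $\Sigma$ into pieces. Embeddedness and genus zero (with a two-piece/half-space argument of Lima--Menezes / Ros type) should show that $\Sigma\cap\{a\cdot x>0\}$ is connected and, crucially, that the nodal set structure of eigenfunctions behaves like that on a planar domain: every simple closed curve separates.

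\textbf{Step 2 (contradiction via the eigenfunction's nodal set).} Let $u$ be a first eigenfunction, with two nodal domains $\Omega_\pm$. Use the planar topology: since $\Sigma$ is a sphere with holes, the nodal set of $u$ is a union of arcs, and the boundary components are distributed between $\Omega_+$ and $\Omega_-$. Using the variational characterization, I would form a test function $v=\ell_a$ or $v = \max(u,0)\cdot$ combination, and compare Rayleigh quotients $\int_\Sigma|\nabla v|^2/\int_{\partial\Sigma}v^2$. Rather, the cleaner route: consider the Steklov quotient on each nodal domain of $\ell_a$; by a reflection/rotational symmetry-free argument (Ros's method), choose $a$ so that $\ell_a$ vanishes at two prescribed points of the nodal set of $u$, forcing $\ell_a$ and $u$ to share enough structure that $\ell_a$ restricted to a nodal domain of $u$ gives a function with Rayleigh quotient $\ge 1$ while $u$ gives $\sigma<1$, and a unique continuation/ Sturm comparison produces a contradiction (the standard ``two eigenfunctions with different eigenvalues and nested nodal domains'' argument: $\int_{\partial\Omega}(u\partial_\eta\ell-\ell\partial_\eta u)=(1-\sigma)\int_{\partial\Omega\cap\partial\Sigma}u\ell$ with signs forced).

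\textbf{Main obstacle.} The hard step is the topological one: showing, using embeddedness and genus zero, that one can find a plane $\{a\cdot x=0\}$ whose intersection with $\Sigma$ is positioned so that some nodal domain of $u$ lies entirely on one side with the correct sign pattern on the boundary integral. I would first try to use the two-piece property (each plane through the origin divides $\Sigma$ into exactly two connected pieces) together with the fact that on a planar surface nodal curves of $u$ connecting boundary components separate, and pick $a$ by a degree/Borsuk--Ulam argument on $S^2$ so that the plane passes through the nodal set of $u$ at the required places.
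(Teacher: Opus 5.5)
There is a genuine gap, and it is exactly the step you flag as the main obstacle. For a nodal domain $\Omega$ of $u$ with $u>0$ on $\Omega$, Green's identity with $\ell_a=a\cdot x$ gives
\[
0=(1-\sigma)\int_{\partial\Omega\cap\partial\Sigma}u\,\ell_a\,ds+\int_{\partial\Omega\cap\Sigma^\circ}\ell_a\,|\partial_\nu u|\,ds .
\]
To get a contradiction from this you need sign information on $\ell_a$ along $\partial\Omega$. In practice that means $\ell_a\geq 0$ on $\partial\Omega$, i.e.\ (by the maximum principle) $\Omega$ lies in the half-space $\{a\cdot x\geq 0\}$. Nothing in your outline produces such an $a$.

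Choosing $a$ from the two-parameter family of planes so that $\{\ell_a=0\}$ passes through two prescribed points of the nodal set of $u$ imposes finitely many pointwise conditions. It gives no control of the sign of $\ell_a$ on all of $\partial\Omega$, which need not be anywhere near planar. The two-piece property describes the nodal domains of $\ell_a$, not of $u$. The reverse containment (a component of $\{\ell_a>0\}$ inside $\{u>0\}$) is useless, since the two terms of the corresponding identity then have opposite signs.

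Moreover, the two-piece property holds for \emph{every} embedded free boundary minimal surface. The fact that ``every simple closed curve separates'' says nothing about how the nodal set of $u$ sits relative to planes through the origin. So genus zero plays no real role in your argument, and an argument of this shape would prove the full Fraser--Li conjecture. In the earlier partial results it is symmetry assumptions that supply this kind of containment; your outline offers no substitute.

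The paper avoids nodal domains altogether. It proves instead that $Q(f)=\int_\Sigma|\nabla f|^2\,dA-\int_{\partial\Sigma}f^2\,ds$ has index at most one.

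Genus zero enters only through the Kusner--McGrath radial-graph property: $n=X/|X|$ is injective. This makes the functions $r(\phi\circ n)$ dense in $H^1(\Sigma)$, and $Q(r(\phi\circ n))=q_T(\phi)$ for an explicit nonnegative symmetric tensor $T$ on $S^2$. The fact that the coordinate functions satisfy $Q(X_i,\cdot)=0$ becomes $\operatorname{div}T=0$. The index-one property of $q_T$ is then Minkowski's quadratic inequality, proved via the Shenfeld--van Handel maximum-principle and determinant argument. Some such global input is what your proposal is missing.
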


This confirms the conjecture of Fraser--Li \cite{FraserLi2014} in the genus zero case. By work  of Fraser--Schoen \cite[Theorem 6.6]{FraserSchoen2016}, \Cref{thm:main} implies

\begin{corollary}
    Up to rotations, the critical catenoid is the unique embedded free-boundary minimal annulus.
\end{corollary}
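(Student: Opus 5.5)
We deduce the corollary from \Cref{thm:main} by feeding it into the Fraser--Schoen characterization of the critical catenoid, \cite[Theorem 6.6]{FraserSchoen2016}. That result says the following. If $\Sigma\subset\BB^3$ is a free-boundary minimal annulus whose coordinate functions $x_1,x_2,x_3$ are first Steklov eigenfunctions, then $\Sigma$ is, up to rotation, the critical catenoid. Equivalently, the hypothesis is $\sigma_1(\Sigma)=1$. So the plan is to verify this hypothesis for an arbitrary embedded free-boundary minimal annulus.

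Let $\Sigma\subset\BB^3$ be a smooth, compact, properly embedded free-boundary minimal annulus. An annulus is connected and has genus zero, with two boundary components. Hence \Cref{thm:main} applies directly and gives $\sigma_1(\Sigma)=1$.

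Next we recall why this is the same as the coordinate functions being first eigenfunctions. Minimality of $\Sigma$ means each $x_i$ is harmonic on $\Sigma$. The free-boundary condition says $\Sigma$ meets $\partial\BB^3$ orthogonally, so the outward conormal along $\partial\Sigma$ is the position vector $x$. Therefore $\partial_\eta x_i=\langle \nabla x_i, x\rangle = x_i$ on $\partial\Sigma$. Thus each $x_i$ is a Steklov eigenfunction with eigenvalue $1$. These functions are nonconstant because $\Sigma$ is not contained in a point. So $1$ is always a nonzero Steklov eigenvalue and $\sigma_1(\Sigma)\le 1$. \Cref{thm:main} supplies the reverse inequality, so the $x_i$ lie in the first eigenspace.

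Invoking \cite[Theorem 6.6]{FraserSchoen2016} then identifies $\Sigma$ with the critical catenoid up to a rotation of $\BB^3$. The only point requiring care is matching hypotheses: Fraser--Schoen's theorem is stated for free-boundary minimal annuli with $\sigma_1=1$, and it does not need embeddedness. Embeddedness is used only to access \Cref{thm:main}, so no further obstacle remains.
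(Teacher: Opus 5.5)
Your proposal is correct and follows the paper's route: you apply \Cref{thm:main} to the embedded annulus, which has genus zero, to get $\sigma_1(\Sigma)=1$, so the coordinate functions are first Steklov eigenfunctions. You then invoke Fraser--Schoen \cite[Theorem 6.6]{FraserSchoen2016} to identify $\Sigma$ with the critical catenoid up to rotation.
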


This may be seen as the free boundary analogue of the Lawson conjecture (uniqueness of the Clifford torus), as resolved by Brendle \cite{Brendle:lawson} using a two-point maximum principle argument.

\subsection{Related literature}

Besides the equatorial disk and the critical catenoid, many embedded free-boundary minimal surfaces in $\BB^3$ have been constructed by variational methods \cite{Ketover2016,CarlottoFranzSchulz2022,Chu2023,FranzKetoverSchulz2024,FranzSchulz2026}, gluing methods \cite{FolhaPacardZolotareva2017,KapouleasLi2021,KapouleasMcGrath2023,KapouleasWiygul2023,CarlottoSchulzWiygul2025a,CarlottoSchulzWiygul2025b,KapouleasZou2026}, and spectral methods \cite{FraserSchoen2016,KarpukhinStern2024}. In fact, every compact orientable surface with nonempty boundary occurs as an embedded free-boundary minimal surface in $\BB^3$ \cite{KarpukhinKusnerMcGrathStern2024}.

The connection with the Steklov problem was developed by Fraser--Schoen \cite{FraserSchoen2011,FraserSchoen2016}. The coordinate functions of a free-boundary minimal immersion are Steklov eigenfunctions with eigenvalue $1$, while extremal metrics for the normalized first Steklov eigenvalue give free-boundary minimal immersions by first eigenfunctions. Fraser--Li conjectured that every embedded free-boundary minimal surface in $\BB^3$ has $\sigma_1=1$ \cite{FraserLi2014}. Earlier cases under symmetry assumptions were obtained by McGrath and Seo \cite{McGrath2018,Seo2024,Seo2026}. Using the two-piece property of Lima--Menezes \cite{LimaMenezes2021}, Kusner--McGrath obtained further symmetric cases and the radial-graph property used below \cite{KusnerMcGrath2024}; related radial-projection estimates were obtained by McGrath--Zou \cite{McGrathZou2024}.

Nonrotational immersed free-boundary minimal annuli in $\BB^3$ were constructed in \cite{FernandezHauswirthMira2023,KapouleasMcGrath2022}. These annuli necessarily satisfy $\sigma_1<1$, since Fraser--Schoen proved that the critical catenoid is the only immersed free-boundary minimal annulus whose coordinate functions are first Steklov eigenfunctions \cite{FraserSchoen2016}.

\subsection{Description of the strategy}

The min--max characterization of the first nonzero Steklov eigenvalue gives
$$
\sigma_1(\Sigma)=\inf_{\substack{f\in H^1(\Sigma),\,f|_{\partial\Sigma}\not\equiv0\\ \int_{\partial\Sigma}f\,ds=0}}\frac{\int_\Sigma|\nabla f|^2\,dA}{\int_{\partial\Sigma}f^2\,ds}.
$$
Consider the quadratic form
$$
Q(f,\psi)=\int_\Sigma\langle\nabla f,\nabla\psi\rangle\,dA-\int_{\partial\Sigma}f\psi\,ds.
$$
\Cref{thm:main} is equivalent to the claim that $Q$ has at most one negative direction.

When $\Sigma$ is a radial graph (all non-flat embedded genus zero free boundary minimal surfaces are radial graphs) test functions on $\Sigma$ can be approximated by functions on $S^2$ composed with the radial projection. More precisely, writing $r=|X|$ and $n=X/|X|$, one has
$$
\overline{\{r(\phi\circ n):\phi\in C^\infty(S^2)\}}^{\,H^1(\Sigma)}=H^1(\Sigma).
$$
This allows us to study $Q$ through a quadratic form on the sphere. Indeed, for an explicit tensor $T$ on $S^2$,
$$
Q\bigl(r(\phi\circ n),r(\psi\circ n)\bigr)=q_T(\phi,\psi),
$$
where
$$
q_T(\phi,\psi)=\int_{S^2}\left(\langle T\nabla\phi,\nabla\psi\rangle-(\tr T)\phi\psi\right)d\omega.
$$
The tensor $T$ has the properties
$$
T=T^*,\qquad T\geq0,\qquad T\not\equiv0,\qquad \operatorname{div}T=0.
$$
Pretend that $T$ is smooth in the sequel (in reality we use a smoothing argument). 

Results in convex geometry (Minkowski's quadratic inequality) imply that every quadratic form $q_T$ associated with such a tensor has index one, see \Cref{thm:spherical}. We sketch (one) proof of this result here, closely following the work of Shenfeld--van Handel \cite{ShenfeldVanHandel2019}. The condition $\operatorname{div}T=0$ allows one to construct a positive function $u$ and a self-adjoint operator $L$ such that
$$
q_T(\phi,\psi)=-\langle\phi,L\psi\rangle,\qquad Lu=u.
$$
Thus the negative directions of $q_T$ correspond to the positive eigenvalues of $L$. If $Lf=\lambda f$, writing $f=uv$ and applying the maximum principle at a positive maximum of $v$ gives $\lambda\leq1$, with equality only when $f$ is a multiple of $u$. On the other hand, an elementary two-dimensional determinant inequality gives
$$
\|Lf\|^2\geq\langle f,Lf\rangle.
$$
For an eigenfunction this becomes $\lambda^2\geq\lambda$, so every positive eigenvalue satisfies $\lambda\geq1$. It follows that $1$ is the only positive eigenvalue of $L$, and that it is simple. Therefore $q_T$ has index one. The main theorem follows.

\subsection*{AI disclosure:} These results were obtained by combining ideas of the authors with a publicly available LLM. This resulted in a proof of the main result which was then significantly modified by the authors (again using LLM assistance) into the present form. This paper does not contain AI written text. 

\subsection*{Acknowledgements}

O.C. was partially supported by a Terman Fellowship and an NSF grant (DMS-2304432).

\section{Preliminaries}

Throughout, $\BB^3\subset\R^3$ is the closed unit ball and $S^2=\partial\BB^3$. Write $X:\Sigma\to\BB^3$ for the inclusion. Properly embedded means that $X^{-1}(S^2)=\partial\Sigma$. The free-boundary condition says that $\Sigma$ meets $S^2$ orthogonally, and hence
$$
\eta=X\qquad\text{on }\partial\Sigma.
$$
Minimality is equivalent to $\Delta X=0$. Thus
\begin{equation}\label{eq:fbms}
\Delta X=0\quad\text{in }\Sigma,\qquad \partial_\eta X=X\quad\text{on }\partial\Sigma.
\end{equation}

The Steklov eigenvalues form a discrete sequence $0=\sigma_0<\sigma_1\leq\sigma_2\leq\cdots$. The form $Q$ introduced above is continuous on $H^1(\Sigma)$ by the trace theorem. For a symmetric form $B$, let $\operatorname{ind}(B)$ be the largest dimension of a subspace on which $B$ is negative definite. Since
$$
Q(1)=-|\partial\Sigma|<0,
$$
and $Q(1,f)=0$ whenever $\int_{\partial\Sigma}f\,ds=0$, the bound $\operatorname{ind}(Q)\leq1$ implies that $Q(f)\geq0$ on the boundary-mean-zero subspace. The Rayleigh characterization then gives $\sigma_1(\Sigma)\geq1$.

Writing $X=(X_1,X_2,X_3)$, equation \eqref{eq:fbms} shows that every nonzero coordinate function is a Steklov eigenfunction with eigenvalue $1$. Moreover,
$$
\int_{\partial\Sigma}X_i\,ds=\int_{\partial\Sigma}\partial_\eta X_i\,ds=\int_\Sigma\Delta X_i\,dA=0.
$$
Consequently,
\begin{equation}\label{eq:upper}
\sigma_1(\Sigma)\leq1.
\end{equation}
The only geometric input needed in the proof is the following. Its interior statement is due to Kusner--McGrath \cite[Corollary~4.2(i)]{KusnerMcGrath2024}; we include the short argument extending injectivity to the boundary in Appendix~A. (For $\Sigma$ satisfying $\sigma_1(\Sigma) =1$ this result was previously proven by Fraser--Schoen \cite[Proposition 8.1]{FraserSchoen2016}). 

\begin{proposition}\label{prop:radial}
If $\Sigma$ is an embedded genus zero free boundary minimal surface other than the equatorial disk, then $0\notin\Sigma$ and
$$
n=\frac{X}{|X|}:\Sigma\longrightarrow S^2
$$
is injective on $\Sigma$ and a local diffeomorphism on $\Sigma^\circ$.
\end{proposition}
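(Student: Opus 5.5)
The plan is to take the interior statement from Kusner--McGrath \cite[Corollary~4.2(i)]{KusnerMcGrath2024} as given and to prove only the extension of injectivity to the closed surface. Concretely, I assume the following: if $\Sigma$ is not the equatorial disk, then $0\notin\Sigma$, and $n$ restricted to $\Sigma^\circ$ is an injective local diffeomorphism. The local diffeomorphism property is equivalent to $\langle X,\nu\rangle\neq 0$ on $\Sigma^\circ$, where $\nu$ is the unit normal, since $dn$ kills exactly the radial direction. The support function $\langle X,\nu\rangle$ satisfies the Jacobi equation $\Delta\langle X,\nu\rangle+|A|^2\langle X,\nu\rangle=0$. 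Since $\Sigma^\circ$ is connected and the function does not vanish, we may choose $\nu$ with $\langle X,\nu\rangle>0$ on $\Sigma^\circ$.

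The remaining issue is injectivity on all of $\Sigma$. On $\partial\Sigma$ we have $|X|=1$, so $n=X$ there. Hence $n$ is injective on $\partial\Sigma$, because $X$ is an embedding, and $n(\partial\Sigma)=\partial\Sigma\subset S^2$.

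It remains to exclude a coincidence $n(p)=n(q)$ with $p\in\Sigma^\circ$ and $q\in\partial\Sigma$. Such a coincidence means the ray through $q$ meets $\Sigma$ at the interior point $p=tq$ with $0<t<1$. To rule it out, I will first show that $\langle X,\nu\rangle>0$ also holds on $\partial\Sigma$. Along the boundary, $\eta=X$ forces $\langle X,\nu\rangle=0$, so I instead apply the Hopf lemma to the positive Jacobi field $w=\langle X,\nu\rangle$. Alternatively, I can compute directly: $\partial_\eta\langle X,\nu\rangle=\langle\eta,\nu\rangle+\langle X,\nabla_\eta\nu\rangle=-A(\eta,\eta)$ on $\partial\Sigma$, using $X=\eta$ there. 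Hopf gives $\partial_\eta w<0$, so $w>0$ strictly in a collar. This means $\Sigma$ near $\partial\Sigma$ is a radial graph over a one-sided neighborhood of the curve $\partial\Sigma$ in $S^2$, in the sense that $n$ is a diffeomorphism of a collar onto a one-sided collar $U$ of $\partial\Sigma$ in $S^2$.

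Now $n(\Sigma^\circ)$ is an open set, and $n\colon\Sigma\to S^2$ is continuous. By invariance of domain and injectivity on $\Sigma^\circ$, $n(\Sigma^\circ)$ is open with boundary contained in $n(\partial\Sigma)=\partial\Sigma$. Thus $n(\Sigma^\circ)$ is a union of components of $S^2\setminus\partial\Sigma$. Suppose some interior point $p$ maps to $q\in\partial\Sigma$. Then $n(\Sigma^\circ)$ is an open set containing $q$, so it contains points on both sides of the curve near $q$.

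The collar already covers one side injectively, namely the side $U$. So points of $\Sigma^\circ$ away from the collar must map onto the other side. Consider a point $y\in U$ near $q$. It is the image of a collar point, and $n(\Sigma^\circ)\ni q$ is open, so a neighborhood of $q$ is covered by interior points near $p$. These are away from the collar if the collar is chosen small, because $p$ is a fixed interior point. Hence $y$ also has a preimage near $p$, which contradicts injectivity of $n$ on $\Sigma^\circ$.

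The main obstacle is this last step, namely making the topological covering argument clean. I would organize it exactly as above: use Hopf to get a genuine radial-graph collar, then use openness of $n$ near $p$ to produce two distinct interior preimages.
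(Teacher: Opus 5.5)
Your proof is correct, and its decisive step is the same as the paper's. Because $n$ is a local diffeomorphism at the interior point $p$, an open neighborhood of $q=n(p)$ is covered by points near $p$, while interior points near $q$ map close to $q$; this gives two distinct interior preimages, contradicting injectivity on $\Sigma^\circ$.

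The Hopf-lemma collar is unnecessary: the paper simply takes interior points $q_j\to q$ and uses continuity of $n$. As stated, the collar claim also slightly overclaims. Since $dn(\eta)=0$ on $\partial\Sigma$, $n$ is only a homeomorphism of the collar onto a one-sided collar up to the boundary, not a diffeomorphism. Likewise, it is $n(\Sigma^\circ)\setminus\partial\Sigma$, not $n(\Sigma^\circ)$, that is a union of components of $S^2\setminus\partial\Sigma$.
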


If $\partial\Sigma$ has one component, then $\Sigma$ is a disk and hence, by Nitsche's theorem \cite{Nitsche1985}, an equatorial disk. This case is immediate, since its Steklov spectrum begins $0,1,1,2,2,\ldots$. We may therefore assume that $\partial\Sigma$ has at least two components. 

Henceforth $\Sigma$ is not an equatorial disk, and we write
$$
r=|X|,\qquad n=\frac{X}{|X|}.
$$

\section{Proof of the main theorem}

By the discussion above, it remains to prove
$$
\operatorname{ind}(Q)\leq 1.
$$

\begin{lemma}\label{lem:radial}
For every $v,w\in C^\infty(\Sigma)$,
\begin{equation}\label{eq:radial}
Q(rv,rw)=\int_\Sigma r^2\left(\langle\nabla v,\nabla w\rangle-|dn|^2vw\right)dA.
\end{equation}
\end{lemma}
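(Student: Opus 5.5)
Polarization reduces \eqref{eq:radial} to the diagonal case $v=w$, but the bilinear computation is no harder, so we carry it out directly. First expand the gradient term:
$$
\langle\nabla(rv),\nabla(rw)\rangle=r^2\langle\nabla v,\nabla w\rangle+r\langle\nabla r,\nabla(vw)\rangle+|\nabla r|^2vw .
$$
The middle term is rewritten as $\tfrac12\langle\nabla r^2,\nabla(vw)\rangle$ and integrated by parts using the divergence theorem on $\Sigma$. This gives
$$
\int_\Sigma \tfrac12\langle\nabla r^2,\nabla(vw)\rangle\,dA=-\int_\Sigma \tfrac12(\Delta r^2)\,vw\,dA+\int_{\partial\Sigma}\tfrac12\,\partial_\eta(r^2)\,vw\,ds .
$$
Two facts about $\Sigma$ evaluate these terms. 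By minimality, $\Delta X=0$, so $\Delta r^2=\Delta|X|^2=2|\nabla X|^2=4$, since $\Sigma$ is two-dimensional and $|\nabla X|^2=2$. On $\partial\Sigma$ we have $\eta=X$ and $r=1$, so $\tfrac12\partial_\eta r^2=\langle X,\eta\rangle=1$. The boundary term therefore equals $\int_{\partial\Sigma}vw\,ds$. This exactly cancels the boundary part of $Q(rv,rw)$, which is $-\int_{\partial\Sigma}r^2vw\,ds=-\int_{\partial\Sigma}vw\,ds$.

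After this cancellation,
$$
Q(rv,rw)=\int_\Sigma\bigl(r^2\langle\nabla v,\nabla w\rangle+(|\nabla r|^2-2)vw\bigr)\,dA ,
$$
and the lemma reduces to the pointwise identity $r^2|dn|^2=2-|\nabla r|^2$ on $\Sigma$. Here $0\notin\Sigma$ by \Cref{prop:radial}, so $r>0$ and $n$ is smooth. To prove the identity, differentiate $X=rn$ to get $dX=dr\,n+r\,dn$. Since $|n|=1$, the vector $dn$ is orthogonal to $n$, so for every tangent vector $e$,
$$
|dX(e)|^2=(dr(e))^2+r^2|dn(e)|^2 .
$$
Summing over an orthonormal frame of $T\Sigma$ gives $2=|\nabla r|^2+r^2|dn|^2$, as required. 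Substituting, $(|\nabla r|^2-2)vw=-r^2|dn|^2vw$, which yields \eqref{eq:radial}.

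The computation is routine. The only points needing care are the value $\Delta r^2=4$, which uses minimality and $\dim\Sigma=2$, and the sign and exact cancellation of the boundary term, which uses the free-boundary condition $\eta=X$ together with $r=1$ on $\partial\Sigma$. I expect no real obstacle. Since $r$ is smooth and positive on the compact surface $\Sigma$, all the integrations by parts are justified for smooth $v,w$.
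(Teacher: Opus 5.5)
Your proof is correct and follows essentially the same route as the paper. You expand the gradient term, integrate the cross term by parts, and use $\eta=X$, $r=1$ on $\partial\Sigma$ to cancel the boundary term. The only difference is packaging. You combine $\Delta r^2=4$ with the pointwise identity $|\nabla r|^2+r^2|dn|^2=2$. The paper instead uses $\Delta r=r|dn|^2$, obtained from $\langle\Delta(rn),n\rangle=0$. These are equivalent via $\Delta r^2=2r\Delta r+2|\nabla r|^2$.
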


\begin{proof}
Using $X=rn$, $|n|=1$, $\Delta X=0$, and $\partial_\eta X=X$, we obtain
$$
0=\langle\Delta(rn),n\rangle=\Delta r-r|dn|^2
$$
and
$$
 r=1,\qquad \partial_\eta r=\frac{\langle X,\partial_\eta X\rangle}{r}=1\quad\text{on }\partial\Sigma.
 $$
Consequently,
$$
\begin{aligned}
Q(rv,rw)
&=\int_\Sigma\left(r^2\langle\nabla v,\nabla w\rangle+\langle\nabla r,\nabla(rvw)\rangle\right)dA-\int_{\partial\Sigma}r^2vw\,ds\\
&=\int_\Sigma\left(r^2\langle\nabla v,\nabla w\rangle-rvw\,\Delta r\right)dA+\int_{\partial\Sigma}(r\partial_\eta r-r^2)vw\,ds\\
&=\int_\Sigma r^2\left(\langle\nabla v,\nabla w\rangle-|dn|^2vw\right)dA.
\end{aligned}
$$
This completes the proof.
\end{proof}
We now rewrite the right-hand side as a quadratic form on $S^2$.

Let $g$ and $d\omega$ denote the round metric and area measure on $S^2$, and set $\Omega=n(\Sigma^\circ)$. By Proposition~\ref{prop:radial}, the map $n:\Sigma^\circ\to\Omega$ is a diffeomorphism. For $y=n(p)$, let $J(p)>0$ be the Jacobian of $n$ and let $e_1,e_2$ be an orthonormal basis of $T_p\Sigma$. Define
\begin{equation}\label{eq:T}
T_y=\frac{r(p)^2}{J(p)}\sum_{\alpha=1}^2dn_p(e_\alpha)\otimes dn_p(e_\alpha),
\end{equation}
and extend $T$ by zero outside $\Omega$.

\begin{proposition}\label{prop:transfer}
The tensor $T$ is well defined, symmetric, nonnegative, and belongs to $L^1(S^2)$. Moreover, it is positive definite on $\Omega$. Furthermore,
\begin{equation}\label{eq:transfer}
q_T(\phi,\psi):=\int_{S^2}\left(\langle T\nabla\phi,\nabla\psi\rangle-(\tr T)\phi\psi\right)d\omega=Q\bigl(r(\phi\circ n),r(\psi\circ n)\bigr)
\end{equation}
for every $\phi,\psi\in C^\infty(S^2)$.
\end{proposition}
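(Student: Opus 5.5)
The plan is to pull everything back to $\Sigma$ by the diffeomorphism $n:\Sigma^\circ\to\Omega$ from \Cref{prop:radial} and reduce \eqref{eq:transfer} to \Cref{lem:radial} via a pointwise change of variables.

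\textbf{Algebraic properties.} First, $T_y$ does not depend on the choice of orthonormal basis $e_1,e_2$: it equals $(r^2/J)\,dn_p\circ dn_p^{*}$, where $dn_p^*$ is the adjoint of $dn_p:T_p\Sigma\to T_yS^2$. Since $n$ is injective on $\Sigma^\circ$, $p=n^{-1}(y)$ is unique, so $T$ is well defined. Symmetry and nonnegativity follow from the form $\sum_\alpha v_\alpha\otimes v_\alpha$. On $\Omega$, $dn_p$ is an isomorphism (local diffeomorphism, $J>0$), so $dn_p dn_p^*$ is positive definite.

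\textbf{Integrability.} We have
\[
\tr T=\frac{r^2}{J}\sum_\alpha|dn(e_\alpha)|^2=\frac{r^2|dn|^2}{J}.
\]
Because $T\ge0$, $|T|\le\tr T$. Changing variables with $d\omega=J\,dA$ gives
\[
\int_{S^2}\tr T\,d\omega=\int_{\Sigma^\circ}r^2|dn|^2\,dA.
\]
This is finite: $0\notin\Sigma$ by \Cref{prop:radial}, so $r$ is bounded below on the compact surface $\Sigma$, and hence $n$ is smooth up to $\partial\Sigma$.

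\textbf{The identity.} Set $v=\phi\circ n$ and $w=\psi\circ n$. Then $\langle\nabla v,\nabla w\rangle=\sum_\alpha d\phi(dn\,e_\alpha)\,d\psi(dn\,e_\alpha)=\langle dn\,dn^*\nabla\phi,\nabla\psi\rangle(n(p))$, and $vw=(\phi\psi)\circ n$. Therefore the integrand in \eqref{eq:radial} at $p$ equals
\[
J(p)\left(\langle T\nabla\phi,\nabla\psi\rangle-(\tr T)\phi\psi\right)(n(p)).
\]
Integrating over $\Sigma^\circ$ (which has full measure in $\Sigma$) and changing variables onto $\Omega$ gives the integral over $\Omega$. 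Since $T=0$ off $\Omega$, this is the integral over $S^2$. Combined with \Cref{lem:radial}, which applies since $v,w\in C^\infty(\Sigma)$ by smoothness of $n$ up to the boundary, this yields \eqref{eq:transfer}.

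\textbf{Main obstacle.} The only delicate point is justifying the change of variables when $J$ may degenerate near $\partial\Sigma$ or when $\Omega$ is not all of $S^2$. Injectivity plus $J>0$ on $\Sigma^\circ$ makes the area formula valid for nonnegative measurable integrands. We apply it separately to the positive and negative parts of each term, which are integrable by the bounds above. No control on $1/J$ is needed, because each $1/J$ in $T$ is cancelled by the Jacobian factor $J$.
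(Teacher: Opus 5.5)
Your proposal is correct and follows the paper's proof: basis-independence via $dn_p\,dn_p^*$, the pointwise identities $J\langle T\nabla\phi,\nabla\psi\rangle=r^2\langle\nabla(\phi\circ n),\nabla(\psi\circ n)\rangle$ and $J\,\tr T=r^2|dn|^2$, change of variables by $n$, and Lemma~\ref{lem:radial}. You also supply two details the paper leaves implicit: why $\int_\Sigma r^2|dn|^2\,dA$ is finite (since $n$ is smooth up to $\partial\Sigma$), and why the change of variables is valid for signed integrands.
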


\begin{proof}
The sum in \eqref{eq:T} is $dn_p\,dn_p^*$, so $T$ is independent of the choice of orthonormal basis, symmetric, and nonnegative. Since $dn_p$ is an isomorphism, $T$ is positive definite on $\Omega$. For $y=n(p)$ the chain rule gives
\begin{equation*}
J(p)\langle T_y\nabla\phi,\nabla\psi\rangle
=r(p)^2\sum_{\alpha=1}^2e_\alpha(\phi\circ n)e_\alpha(\psi\circ n)
=r(p)^2\langle\nabla(\phi\circ n),\nabla(\psi\circ n)\rangle,
\end{equation*}
\begin{equation}\label{eq:traceT}
J(p)\tr T_y
=r(p)^2\sum_{\alpha=1}^2|dn_p(e_\alpha)|^2
=r(p)^2|dn_p|^2.
\end{equation}

Changing variables by $n$ therefore gives
$$
\begin{aligned}
q_T(\phi,\psi)
&=\int_\Sigma r^2\left(\langle\nabla(\phi\circ n),\nabla(\psi\circ n)\rangle-|dn|^2(\phi\circ n)(\psi\circ n)\right)dA\\
&=Q\bigl(r(\phi\circ n),r(\psi\circ n)\bigr),
\end{aligned}
$$
where the last equality follows from Lemma~\ref{lem:radial}. Finally, \eqref{eq:traceT} gives
$$
\int_{S^2}\tr T\,d\omega=\int_\Sigma r^2|dn|^2\,dA<\infty,
$$
and hence $T\in L^1(S^2)$.
\end{proof}

We shall use the following density statement, whose proof is given in Appendix~\ref{appendix}.

\begin{lemma}\label{lem:density}
The functions $r(\phi\circ n)$, with $\phi\in C^\infty(S^2)$, are dense in $H^1(\Sigma)$.
\end{lemma}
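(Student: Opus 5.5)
Since $r\ge r_0>0$ on $\Sigma$ (because $0\notin\Sigma$ and $\Sigma$ is compact), multiplication by $r$ and by $1/r$ are bounded isomorphisms of $H^1(\Sigma)$. It therefore suffices to show that functions $\phi\circ n$, with $\phi\in C^\infty(S^2)$, are dense in $H^1(\Sigma)$. Since $C^\infty(\Sigma)$ (smooth up to the boundary) is dense in $H^1(\Sigma)$, it is enough to approximate a fixed $v\in C^\infty(\Sigma)$ by such functions.

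By Proposition~\ref{prop:radial}, $n:\Sigma\to S^2$ is continuous and injective on the compact set $\Sigma$, so it is a homeomorphism onto $K=n(\Sigma)=\overline\Omega$. It is a diffeomorphism $\Sigma^\circ\to\Omega$. On $\partial\Sigma$ we have $r=1$ and $n=X$, so $n|_{\partial\Sigma}$ is the identity embedding of the boundary curves into $S^2$. The only possible degeneration is at the boundary: there the differential $dn$ may fail to be invertible, because $\Sigma$ meets $S^2$ orthogonally. Indeed, the conormal direction $\eta=X$ is radial, and hence $dn(\eta)=0$ on $\partial\Sigma$. So $n$ is not a diffeomorphism up to the boundary, and $\phi=v\circ n^{-1}$ need not extend smoothly across $\partial\Omega$. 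This is the main obstacle.

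\textbf{Step 1: cut off near the boundary.} I would approximate $v$ in $H^1$ by functions that are constant along each short radial segment near the boundary, i.e.\ that depend only on $n$ in a collar. Choose the collar $\{1-\delta\le r\le 1\}$, which by transversality at $\partial\Sigma$ (where $\partial_\eta r=1$) is a neighborhood of $\partial\Sigma$ parametrized by $(r,\theta)$ with $\theta\in\partial\Sigma$. In this collar, $n(r,\theta)$ is close to $\theta$. Define
$$
v_\delta=\chi\, v + (1-\chi)\,\bigl(v|_{\partial\Sigma}\circ \pi\bigr),
$$
where $\chi$ is a cutoff equal to $1$ for $r\le 1-\delta$, and $\pi$ is the nearest-point projection of $K$ near $\partial\Omega$ onto the curves $\partial\Omega=\partial\Sigma$, composed with $n$. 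Because $v$ is smooth, $|v-v|_{\partial\Sigma}\circ\pi\circ n|=O(\delta)$ on a collar of area $O(\delta)$, while $|\nabla\chi|=O(1/\delta)$. Hence $\|v-v_\delta\|_{H^1}^2=O(\delta)\to0$; the gradient error is $O(\delta^2\cdot\delta^{-2}\cdot\delta)$. In checking this I must verify that $\nabla(\psi\circ\pi\circ n)$ stays bounded near $\partial\Sigma$. It does, since $n$ is smooth up to the boundary ($r\ge r_0$) and $\pi$ is smooth near $\partial\Omega$.

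\textbf{Step 2: extend to the sphere.} Write $v_\delta=\phi_\delta\circ n$. Here $\phi_\delta$ is smooth on $\Omega$ away from a neighborhood of $\partial\Omega$, and equals $\psi\circ\pi$ near $\partial\Omega$, which extends smoothly across $\partial\Omega$ to a tubular neighborhood in $S^2$. Using a smooth extension of $\phi_\delta$ to all of $S^2$ (Whitney/Seeley extension, or simply a cutoff outside $K$), we obtain $\phi\in C^\infty(S^2)$ with $\phi\circ n=v_\delta$ exactly. Combining this with the density of smooth functions and the multiplication by $r$ finishes the argument. The only delicate point is the uniform bound in Step 1 near the degenerate boundary. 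If the orthogonality makes $n$ fold back (locally non-injective near the boundary), that is excluded by the injectivity on all of $\Sigma$ in Proposition~\ref{prop:radial}.
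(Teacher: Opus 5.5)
Your proof is correct and is essentially the paper's argument. Writing $v_\delta=w+\chi(v-w)$ with $w=\psi\circ\pi\circ n$, you are doing three things, all of which the paper does:
\begin{itemize}
\item You subtract a smooth extension to $S^2$ of the boundary data; the paper's $\phi_0$ is here the specific choice $\psi\circ\pi$, cut off away from $\Gamma$.
\item You cut off the remainder, which vanishes on $\partial\Sigma$, using a cutoff with gradient $O(\delta^{-1})$.
\item You push the compactly supported interior piece to $S^2$ through the interior diffeomorphism, extending by zero and using injectivity of $n$ up to the boundary.
\end{itemize}
The paper's bookkeeping is slightly cleaner: it fixes $\phi_0\in C^\infty(S^2)$ globally and then adds the zero-extended piece, which avoids the gluing/extension step in your Step 2.
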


In the sequel we regard $q_T$ as a form on $C^\infty(S^2)$ so $\operatorname{ind}(q_T)$ is the index as a form on this vector space. (Note that since $T$ is a priori only in $L^1$ we may not be able to evaluate $q_T$ on arbitrary elements in $H^1(S^2)$. We avoid this issue in several places by smoothing arguments.)

\begin{lemma}\label{lem:index-transfer} $\operatorname{ind}(Q)\leq\operatorname{ind}(q_T).$
\end{lemma}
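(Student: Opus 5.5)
To prove $\operatorname{ind}(Q)\leq\operatorname{ind}(q_T)$, I would show that any finite-dimensional subspace of $H^1(\Sigma)$ on which $Q$ is negative definite can be perturbed, via Lemma~\ref{lem:density}, into a subspace of the form $\{r(\phi\circ n)\}$ of the same dimension on which $Q$ is still negative definite. Then \eqref{eq:transfer} transfers this to a negative definite subspace for $q_T$ on $C^\infty(S^2)$.

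Concretely, suppose $V\subset H^1(\Sigma)$ has dimension $k$ and $Q$ is negative definite on $V$. Choose a basis $f_1,\dots,f_k$ of $V$. The Gram matrix $A_{ij}=Q(f_i,f_j)$ is negative definite. $Q$ is a bounded symmetric bilinear form on $H^1(\Sigma)$, by Cauchy--Schwarz and the trace inequality $\|f\|_{L^2(\partial\Sigma)}\leq C\|f\|_{H^1(\Sigma)}$. Hence the map $(f_1,\dots,f_k)\mapsto (Q(f_i,f_j))_{ij}$ is continuous from $H^1(\Sigma)^k$ to symmetric matrices. Negative definiteness of a symmetric matrix is an open condition, and so is linear independence of a $k$-tuple: the $L^2$ Gram matrix $(\langle f_i,f_j\rangle_{L^2(\Sigma)})$ has nonzero determinant, and this is continuous in $H^1$. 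Both conditions therefore survive small perturbations.

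By Lemma~\ref{lem:density} I pick $\phi_i\in C^\infty(S^2)$ with $\|r(\phi_i\circ n)-f_i\|_{H^1(\Sigma)}$ small enough that the matrix $\bigl(Q(r(\phi_i\circ n),r(\phi_j\circ n))\bigr)_{ij}$ is still negative definite. This already forces the functions $g_i=r(\phi_i\circ n)$ to be linearly independent, since a nontrivial relation $\sum c_i g_i=0$ would give $c^{\top}(Q(g_i,g_j))c=0$. By \eqref{eq:transfer}, this matrix equals $(q_T(\phi_i,\phi_j))_{ij}$.

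The $\phi_i$ are linearly independent in $C^\infty(S^2)$: if $\sum c_i\phi_i=0$, then $\sum c_i g_i=0$, so $c=0$. Hence $\operatorname{span}\{\phi_i\}$ is a $k$-dimensional subspace of $C^\infty(S^2)$ on which $q_T$ is negative definite, so $\operatorname{ind}(q_T)\geq k$. Taking the supremum over $V$ gives the lemma.

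There is no real obstacle beyond Lemma~\ref{lem:density}, which is assumed. The one point to be careful about is that $q_T$ is evaluated only on smooth $\phi$, where \eqref{eq:transfer} is valid; this is why the approximation is done on the $\Sigma$ side and never requires extending $q_T$ to $H^1(S^2)$.
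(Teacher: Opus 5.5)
Your proposal is correct and follows essentially the same route as the paper: approximate a basis of a negative definite subspace using Lemma~\ref{lem:density}, use continuity of $Q$ and the openness of negative definiteness, then transfer via \eqref{eq:transfer}. You also check explicitly that the $\phi_i$ are linearly independent, which the paper leaves implicit since it follows from the negative definite Gram matrix.
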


\begin{proof}
Let $E\subset H^1(\Sigma)$ be an $m$-dimensional subspace on which $Q$ is negative definite, and choose a basis $f_1,\ldots,f_m$ of $E$. By Lemma~\ref{lem:density}, there are functions $\phi_{i,k}\in C^\infty(S^2)$ such that
$$
v_{i,k}=r(\phi_{i,k}\circ n)\longrightarrow f_i\qquad\text{in }H^1(\Sigma).
$$
Since $Q$ is continuous, Proposition~\ref{prop:transfer} gives
$$
\bigl(q_T(\phi_{i,k},\phi_{j,k})\bigr)_{i,j=1}^m
=
\bigl(Q(v_{i,k},v_{j,k})\bigr)_{i,j=1}^m
\longrightarrow
\bigl(Q(f_i,f_j)\bigr)_{i,j=1}^m.
$$
The matrix on the right is negative definite, and hence so is the matrix on the left for all sufficiently large $k$. Thus $q_T$ has an $m$-dimensional negative subspace.
\end{proof}
We will use the notation
$$A:B:= \sum\limits_{i,j}A_{ij}B_{ij}.$$
and
$$(V\otimes W)_{ij}=V_iW_j.$$
We now have 
\begin{lemma}\label{lem:divfree} $\operatorname{div}T=0$ distributionally.
\end{lemma}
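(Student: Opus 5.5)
The plan is to test $\operatorname{div}T=0$ against an arbitrary smooth vector field $V$ on $S^2$, pull the resulting integral back to $\Sigma$ via $n$, and recognize it as a boundary term that vanishes by \eqref{eq:fbms}. Concretely, I will show
$$
\int_{S^2}T:\nabla V\,d\omega=0\qquad\text{for every smooth tangent vector field }V\text{ on }S^2,
$$
where $\nabla$ is the Levi-Civita connection of $g$. Since $T$ is symmetric and in $L^1$ by Proposition~\ref{prop:transfer}, this is the distributional statement $\operatorname{div}T=0$, whatever sign convention is used.

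\textbf{Step 1: pull back to $\Sigma$.} Since $T=0$ off $\Omega$ and $n:\Sigma^\circ\to\Omega$ is a diffeomorphism (Proposition~\ref{prop:radial}), \eqref{eq:T} and a change of variables give
$$
\int_{S^2}T:\nabla V\,d\omega=\int_\Sigma r^2\sum_{\alpha}\bigl\langle \nabla_{dn(e_\alpha)}V,\;dn(e_\alpha)\bigr\rangle\,dA .
$$
The integrand is bounded, since $0\notin\Sigma$ makes $n$ smooth up to $\partial\Sigma$, so the integral is absolutely convergent. Set $W=V\circ n$, viewed as an $\mathbb{R}^3$-valued smooth function on $\Sigma$ with $\langle W,n\rangle=0$. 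Covariant differentiation on $S^2$ is the tangential projection of ambient differentiation, and $dn(e_\alpha)=e_\alpha(n)$ is tangent to $S^2$ at $n$. Hence
$$
\langle\nabla_{dn(e_\alpha)}V,dn(e_\alpha)\rangle=\langle e_\alpha(W),e_\alpha(n)\rangle,
$$
and the integral equals $\int_\Sigma r^2\langle\nabla W,\nabla n\rangle\,dA$, with the pairing taken componentwise in $\mathbb{R}^3$.

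\textbf{Step 2: algebraic identity.} Let $Y=rW$ and use $X=rn$. Expanding $\langle\nabla Y,\nabla X\rangle$ with $\nabla Y=\nabla r\,W+r\nabla W$ and $\nabla X=\nabla r\,n+r\nabla n$ gives four terms:
\begin{itemize}
\item the $\nabla r\,W\cdot\nabla r\,n$ term is $|\nabla r|^2\langle W,n\rangle=0$;
\item the two cross terms are $r\sum_\alpha e_\alpha(r)\langle W,e_\alpha(n)\rangle$ and $r\sum_\alpha e_\alpha(r)\langle e_\alpha(W),n\rangle$, which cancel because differentiating $\langle W,n\rangle=0$ gives $\langle e_\alpha W,n\rangle=-\langle W,e_\alpha n\rangle$;
\item the remaining term is $r^2\langle\nabla W,\nabla n\rangle$.
\end{itemize}
Thus $r^2\langle\nabla W,\nabla n\rangle=\langle\nabla Y,\nabla X\rangle$.

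\textbf{Step 3: integrate by parts.} Using \eqref{eq:fbms} and then $r=1$, $X=n$ on $\partial\Sigma$,
$$
\int_\Sigma\langle\nabla Y,\nabla X\rangle\,dA=-\int_\Sigma\langle Y,\Delta X\rangle\,dA+\int_{\partial\Sigma}\langle Y,\partial_\eta X\rangle\,ds=\int_{\partial\Sigma}r\langle W,X\rangle\,ds=\int_{\partial\Sigma}\langle W,n\rangle\,ds=0 .
$$
This proves the claim.

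The main point requiring care is Step 1. One must check that the change of variables is legitimate on the open set $\Omega$, which relies on the injectivity in Proposition~\ref{prop:radial}, and that no boundary contribution arises from $\partial\Omega$ on the sphere side. This is automatic here: the identity is proved by integrating on $\Sigma$, where everything is smooth up to $\partial\Sigma$, and only the integral over $\Omega=n(\Sigma^\circ)$ is transported. Consequently the possible roughness of $T$ across $\partial\Omega=n(\partial\Sigma)$ never enters.
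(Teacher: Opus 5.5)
Your proof is correct. At bottom it is the paper's argument with the sum over the coordinate index done first, but it is organized differently enough to be worth comparing.

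The paper never pulls back a general test field. It combines Proposition~\ref{prop:transfer} with \eqref{eq:fbms} to get $q_T(x_i,\phi)=Q(X_i,r(\phi\circ n))=0$, since each $X_i$ is a Steklov eigenfunction with eigenvalue $1$. It then rewrites $q_T(x_i,\phi)=\int_{S^2}T:\nabla(\phi\nabla x_i)\,d\omega$ pointwise using $\nabla^2x_i=-x_ig$. Finally it reaches an arbitrary $V$ through $V=\sum_i\langle V,\nabla x_i\rangle\nabla x_i$.

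To see the match, choose $\phi=\langle V,\nabla x_i\rangle$, so that $r(\phi\circ n)=Y_i$ is the $i$-th component of your $Y=rW$. Summing over $i$ gives $\sum_iQ(X_i,Y_i)=\int_\Sigma\langle\nabla X,\nabla Y\rangle\,dA-\int_{\partial\Sigma}\langle X,Y\rangle\,ds$, and the boundary term vanishes by the tangency $\langle W,n\rangle=0$. Your pointwise identity in Step~2 then replaces the summed form of Lemma~\ref{lem:radial}, where tangency likewise kills the $|dn|^2$ term.

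What each route buys:
\begin{itemize}
\item The paper's route is shorter because it recycles the transfer identity. It also shows that $\operatorname{div}T=0$ is equivalent to $q_T(x_i,\cdot)=0$ for $i=1,2,3$.
\item Your route is self-contained. It uses only \eqref{eq:T}, the change of variables, and \eqref{eq:fbms}. It avoids the spanning step and the identities $\nabla^2x_i=-x_ig$ and $\sum_i\nabla x_i\otimes\nabla x_i=g$ on $S^2$.
\end{itemize}
In neither version is anything integrated by parts on $S^2$. So, as you observe, the behavior of $T$ near $n(\partial\Sigma)$ never enters.
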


\begin{proof}
Let $x_1,x_2,x_3$ be the coordinate functions on $S^2$. Since $r(x_i\circ n)=X_i$, Proposition~\ref{prop:transfer} and \eqref{eq:fbms} give
\[
q_T(x_i,\phi)=Q\bigl(X_i,r(\phi\circ n)\bigr)=0
\]
for every $\phi\in C^\infty(S^2)$. Since $\nabla^2x_i=-x_i g$, the definition of $q_T$ and integration by parts yield
\[
0=q_T(x_i,\phi)=\int_{S^2}T:\nabla(\phi\nabla x_i)\,d\omega=-\langle\operatorname{div}T,\phi\nabla x_i\rangle.
\]
Thus, for every $V\in C^\infty(TS^2)$, the identity $\sum_{i=1}^3\nabla x_i\otimes\nabla x_i=g$ gives
\[
\langle\operatorname{div}T,V\rangle=\sum_{i=1}^3\bigl\langle\operatorname{div}T,\langle V,\nabla x_i\rangle\nabla x_i\bigr\rangle=0.
\]
Hence $\operatorname{div}T=0$ distributionally.
\end{proof}

We have therefore associated to $\Sigma$ a nonzero symmetric, nonnegative, divergence-free tensor $T\in L^1(S^2)$ such that
$$
\operatorname{ind}(Q)\leq\operatorname{ind}(q_T).
$$
It remains to prove that $q_T$ has index at most one.
\subsection{The spherical index bound} The goal of this section is to give a proof of the following result.
\begin{theorem}\label{thm:spherical} Let $T$ be the tensor defined in \eqref{eq:T}. Then $\operatorname{ind}(q_T)=1$. 
\end{theorem}

Using a smoothing argument (cf.\ \Cref{lem:tensor-approximation}), this follows from Minkowski's quadratic inequality in dimension three \cite{Minkowski} (cf.\ \Cref{rem:minkowski}). We give a proof below following Shenfeld--van Handel \cite[Section 3]{ShenfeldVanHandel2019} (see also \cite{CorderoErausquinKlartagMerigotSantambrogio2019} and \cite{ShenfeldVanHandel2022}). We note that the result when $T$ is smooth and positive-definite is also a direct consequence of Colesanti's result \cite[Theorem 4]{Colesanti2008}.

\begin{proposition}\label{pro:smooth-spherical} Let $T$ be a smooth positive definite symmetric tensor on $S^2$ satisfying $\operatorname{div}T=0$. Then $q_T$ has index one. 
\end{proposition}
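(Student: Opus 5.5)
Following the sketch in the introduction (after Shenfeld--van Handel), I first build a positive function $u$ and a self-adjoint operator $L$ with $q_T(\phi,\psi)=-\langle \phi, L\psi\rangle$ and $Lu=u$. Integrating by parts, $q_T(\phi,\psi)=-\int_{S^2}\phi\,(\operatorname{div}(T\nabla\psi)+(\tr T)\psi)\,d\omega$. This suggests the operator $\mathcal{D}\psi=\operatorname{div}(T\nabla\psi)+(\tr T)\psi$, which is elliptic since $T>0$.

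To find a positive solution of $\mathcal{D}u=\mu u$ with the right normalization, I use $\operatorname{div}T=0$. For $\ell(x)=\langle a,x\rangle$ we have $\nabla^2\ell=-\ell g$, so
\[
\operatorname{div}(T\nabla\ell)=(\operatorname{div}T)\cdot\nabla\ell+T:\nabla^2\ell=-(\tr T)\ell,
\]
that is, $\mathcal{D}\ell=0$. Hence the linear functions form a three-dimensional kernel of $\mathcal{D}$; this is the analogue of translations in Minkowski theory. I then set $L=m^{-1}\mathcal{D}$ for a suitable positive weight $m$, with $u$ the principal positive eigenfunction. The simplest choice is $m=\tr T$, which gives $u\equiv 1$ and $\mathcal{D}1=\tr T$, so $L1=1$. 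Then $L=(\tr T)^{-1}\mathcal{D}$ is self-adjoint on $L^2((\tr T)\,d\omega)$, and $q_T(\phi,\psi)=-\langle\phi,L\psi\rangle_{L^2(\tr T)}$. The negative directions of $q_T$ are therefore exactly the positive spectrum of $L$. The spectrum is discrete because $L$ is $(\tr T)^{-1}$ times a uniformly elliptic operator on a compact manifold, and smoothness ensures all of this is rigorous.

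\textbf{Upper bound.} Suppose $Lf=\lambda f$ with $\lambda>0$ and $f$ not a multiple of $1$. At a maximum of $f$ (replace $f$ by $-f$ if needed so that $\max f>0$) we have $\operatorname{div}(T\nabla f)=T:\nabla^2 f\le 0$, so $\lambda f\le f$ there and $\lambda\le 1$. If $\lambda=1$, then $\operatorname{div}(T\nabla f)=0$ with zero first-order term. The strong maximum principle forces $f$ to be constant, which contradicts $\int f\,\tr T=0$ (orthogonality to the eigenfunction $1$). So every other eigenvalue satisfies $\lambda<1$, and $1$ is simple. Note that with $u=1$ the substitution $f=uv$ is trivial.

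\textbf{Lower bound.} The key claim is $\|Lf\|^2\ge\langle f,Lf\rangle$ in $L^2(\tr T)$, that is,
\[
\int\frac{(\operatorname{div}(T\nabla f)+(\tr T)f)^2}{\tr T}\ \ge\ \int\big((\tr T)f^2-\langle T\nabla f,\nabla f\rangle\big).
\]
Expand the left side and use $\operatorname{div}T=0$ to write $\operatorname{div}(T\nabla f)=T:\nabla^2 f$. The inequality then reduces to
\[
\int\Big(\frac{(T:\nabla^2f)^2}{\tr T}+2f\,T:\nabla^2 f+\langle T\nabla f,\nabla f\rangle\Big)\ge0.
\]
Integrating $f\,T:\nabla^2 f$ by parts (again with $\operatorname{div}T=0$) and commuting derivatives via the Ricci identity on $S^2$ should convert the integrand into
\[
\frac{(T:H)^2}{\tr T}-T:\operatorname{cof}\text{-type}(H)\quad\text{with } H=\nabla^2 f+fg,
\]
using a Bochner-type identity $\int T:(\nabla^2 f\,\nabla^2 f)$ versus $\int T:(\nabla^2 f)^2$. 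The pointwise two-dimensional inequality $(\tr(TH))^2\ge \tr T\cdot\big(\tr T\,\det H\cdot\ldots\big)$ then finishes; concretely this is $(T:H)^2\ge (\tr T)\,(\tr(TH)\tr H-\tr(THH))$, a $2\times2$ determinant inequality valid for $T\ge0$. For an eigenfunction this gives $\lambda^2\ge\lambda$, so any positive $\lambda\ge1$. Combined with the upper bound, the only positive eigenvalue is $1$ and it is simple, so $\operatorname{ind}(q_T)=1$.

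I expect the main obstacle to be this lower-bound step: finding the exact integration-by-parts identity on $S^2$, where curvature terms must cancel against the $fg$ shift, and verifying the matrix inequality.
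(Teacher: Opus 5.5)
\textbf{There is a genuine gap: the lower-bound step.} Your setup and upper bound are fine. Since $\operatorname{div}T=0$, you have $\mathcal{D}\psi=T:(\nabla^2\psi+\psi g)$, and the weight $\tr T$ gives $L1=1$. The maximum-principle argument then shows that every positive eigenvalue is at most $1$ and that the $1$-eigenspace is spanned by constants. This part is even slightly simpler than the paper's $f=uv$ version.

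The problem is the lower bound. After your own reduction (using $\int f\,T:\nabla^2 f=-\int\langle T\nabla f,\nabla f\rangle$), the claim $\|Lf\|^2\ge\langle f,Lf\rangle$ in $L^2(\tr T)$ is exactly
\[
\int_{S^2}\frac{(\operatorname{div}(T\nabla f))^2}{\tr T}\,d\omega\ \ge\ \int_{S^2}\langle T\nabla f,\nabla f\rangle\,d\omega .
\]
This says $-(\tr T)^{-1}\operatorname{div}(T\nabla\,\cdot)$ has spectral gap at least $1$. Given the upper bound, that is equivalent to the proposition itself, so the real content has not yet been proved. For $T=g$ this is Lichnerowicz via Bochner. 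For anisotropic $T$, integrating by parts against the nonconstant weight $(\tr T)^{-1}$ produces terms in $\nabla T$ and $\nabla\tr T$ with no reason to cancel.

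Moreover, the pointwise inequality you propose is false. By Cayley--Hamilton, $\tr(TH)\tr H-\tr(THH)=(\det H)\tr T$, so your inequality reads $(T:H)^2\ge(\tr T)^2\det H$. This fails for $T=\operatorname{diag}(1,\epsilon)$ and $H=\operatorname{diag}(\epsilon,1)$, which give $4\epsilon^2$ versus $(1+\epsilon)^2\epsilon$. The correct inequality is $(T:H)^2\ge 4\det T\det H$.

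\textbf{The missing idea is the choice of weight.} Because $\operatorname{div}T=0$, the tensor $\operatorname{cof}T$ is Codazzi, so \Cref{lem:potential} gives $u>0$ with $\nabla^2u+ug=\operatorname{cof}T$. Since $\operatorname{cof}(Af)$ is divergence-free,
\[
\int f\,T:Af=\int f\,\operatorname{cof}(Af):Au=\int u\,\operatorname{cof}(Af):Af=2\int u\det(Af).
\]
To dominate this pointwise by $(T:Af)^2$ using $(T:H)^2\ge4\det T\det H$, you need the weight $d\mu=2\det T\,u^{-1}\,d\omega$. With that weight,
\[
\|Lf\|_\mu^2-\langle f,Lf\rangle_\mu=\int\frac{u}{2\det T}\bigl((T:Af)^2-4\det T\det(Af)\bigr)\,d\omega\ge0 .
\]

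With your weight $\tr T$, a pointwise argument would need $(T:H)^2/\tr T\ge 2u\det H$ for all symmetric $H$. Testing $H=T^{-1}$ shows this requires $u\,\tr T\le 2\det T$, and that fails in general. For example, take $u$ to be the support function of an elongated ellipsoid: at one of the tips $u\ge a>b^2/a$, while both principal radii there equal $b^2/a$. The failure persists under translation, since $u$ is determined only up to linear functions.

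So the lower bound genuinely requires constructing the potential $u$. The inequality in $L^2(\tr T)$ is true, but only a posteriori, as a consequence of the proposition.
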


Set
\[
Af=\nabla^2f+fg.
\]
Recall that a symmetric tensor $S$ is Codazzi if $\nabla_iS_{jk}=\nabla_jS_{ik}$. On $S^2$, $Af$ is Codazzi for every $f\in C^\infty(S^2)$, and in dimension two $S$ is Codazzi if and only if $\operatorname{cof}S$ is divergence-free, where 
$$
\operatorname{cof}S=(\tr S)g-S.
$$
We now have the following standard result (cf.\ \cite{Ferus,SimonOliker}):
\begin{lemma}\label{lem:potential} Let $S$ be a smooth positive definite symmetric Codazzi tensor on $S^2$. Then $S=Au$ for some $u\in C^\infty(S^2)$ with $u>0$. \end{lemma}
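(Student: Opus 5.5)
We will prove Lemma~\ref{lem:potential} by exhibiting $S$ as the Hessian-type tensor $Au$ of an explicit potential, building it in the ambient space $\R^3$.

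\emph{Step 1: the Codazzi condition makes a vector field exact.} Fix a vector $e\in\R^3$ and consider the tangent vector field $V_e=S(\nabla\langle x,e\rangle)$, i.e.\ $V_e=S(e^\top)$, where $e^\top$ denotes the tangential part of $e$. We have $\nabla_j e^\top_k=-\langle x,e\rangle g_{jk}$. Hence
\[
\nabla_jV_k-\nabla_kV_j=(\nabla_jS_{kl}-\nabla_kS_{jl})e^\top_l-\langle x,e\rangle(S_{kj}-S_{jk})=0,
\]
using the Codazzi property and the symmetry of $S$. So the $1$-form dual to $V_e$ is closed. Since $H^1_{dR}(S^2)=0$, it is exact: $V_e=\nabla h_e$, with $h_e$ unique up to a constant, and $h_e$ depends linearly on $e$.

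\emph{Step 2: define $u$.} We look for $u$ with $\nabla u=\sum_i h_{e_i}\nabla x_i$ up to the normal adjustment. A cleaner route is to define the $\R^3$-valued map $Y=\sum_i h_{e_i}e_i$, normalized by choosing the constants so that $\int_{S^2}Y\,d\omega=0$, and to set $u=\langle Y,x\rangle$. The intended meaning is that $Y$ is the inverse Gauss map of a closed surface with support function $u$. Differentiating, $\partial_jY=\sum_i (S e_i^\top)_j e_i=S_{jl}\partial_l x$. Thus $dY$ is tangent, with $\langle dY(\cdot),dx(\cdot)\rangle=S$. Then
\[
\nabla_ju=\langle\partial_jY,x\rangle+\langle Y,\partial_jx\rangle=\langle Y,\partial_jx\rangle,
\]
since $\partial_jY$ is tangent and therefore orthogonal to $x$. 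Differentiating again, and using $\nabla_k\partial_jx=-g_{jk}x$,
\[
\nabla_k\nabla_ju=S_{kj}-g_{jk}u.
\]
Hence $Au=S$.

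\emph{Step 3: positivity of $u$.} This is the main obstacle, since $u$ is determined only up to adding a linear function $\langle c,x\rangle$ (the translation ambiguity of $Y$), and $A\langle c,x\rangle=0$. Because $S>0$, the map $Y:S^2\to\R^3$ parametrizes a smooth closed surface whose normal at $Y(x)$ is $x$, and whose differential $dY=S$ is positive definite. So the surface is strictly convex, with Gauss map $x$, and $u$ is its support function. I would argue convexity directly: $x\mapsto Y(x)$ is an immersion with positive Gauss curvature $1/\det S$ and Gauss map a diffeomorphism, so by Hadamard's theorem it bounds a strictly convex body $K$. Translating so that the origin lies in the interior of $K$ (for instance at the centroid, which corresponds to the normalization above only after adjustment), we get $u(x)=\max_{y\in K}\langle y,x\rangle>0$. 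Translating $K$ replaces $u$ by $u+\langle c,x\rangle$ and does not change $Au=S$.

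An alternative for positivity, avoiding Hadamard, is the following. One has $u(x)-\langle Y(x'),x\rangle\ge0$, with equality only when $x=x'$; this follows from integrating along a great circle, using that $Y$ restricted to the circle is a locally convex planar curve with turning number one. Either way, picking the origin inside the convex hull of $Y(S^2)$ yields $u>0$.

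Smoothness of $u$ is clear from the construction, since $h_e$ is smooth.
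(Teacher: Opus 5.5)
Your proposal is correct, but it proves positivity by a different route than the paper. Steps~1--2 are the paper's construction written componentwise. Your $h_{e_i}$ are the components of the map $F$ with $dF=S$; closedness of each $V_e^\flat$ is the same as closedness of the $\R^3$-valued form $Y\mapsto S(Y)$. Then $u=\langle Y,x\rangle$ and $Au=S$ are obtained exactly as in the paper.

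The difference is in how you get $u>0$. You invoke Hadamard's theorem to see that $Y(S^2)$ bounds a strictly convex body $K$, then translate so that $0\in\operatorname{int}K$; your normalization $\int_{S^2}Y\,d\omega=0$ is never used. The paper stays one-dimensional. Along a minimizing geodesic $\sigma$ from $p$ to $q$, the tangential part of $q$ is $\sin(\theta-t)\dot\sigma$, so $(F(q)-F(p))\cdot q=\int_0^\theta\sin(\theta-t)S(\dot\sigma,\dot\sigma)\,dt>0$. Averaging over $p$ with $\int_{S^2}F\,d\omega=0$ then gives $4\pi u(q)>0$ with no translation. Since $\int_{S^2}F\,d\omega=3\int_{S^2}u\,x\,d\omega$, that normalization places the Steiner point of $K$ (not the centroid) at the origin. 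The averaging argument is exactly a proof that this point is interior.

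The paper's route is elementary and self-contained. The same inequality also yields $u(q)=\max_pF(p)\cdot q$, so $u$ is the support function of the convex hull of $F(S^2)$, which is what \Cref{rem:minkowski} uses. Your route gives the convex-body picture at once, but it rests on a substantial global theorem.

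It also needs one line you skipped: why $x$ is the outward normal at $Y(x)$. This is easy to supply. Once $0\in\operatorname{int}K$, $u(x)=\langle Y(x),x\rangle$ never vanishes, because $Y(x)+x^\perp$ is a supporting plane of $K$. The sign is then fixed by $2\int_{S^2}u\,d\omega=\int_{S^2}\tr S\,d\omega>0$.

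Your alternative argument has an inaccuracy: $Y\circ\sigma$ is not planar in general, since $dY(\dot\sigma)=S\dot\sigma$ can have a component normal to the plane of the great circle. Only its orthogonal projection onto that plane is a locally convex curve of turning number one. The paper's one-line derivative computation is the clean way to get your inequality.
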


\begin{proof}
Regard $S$ as the $\R^3$-valued one-form $\alpha(Y)=S(Y)$. The Gauss formula, the Codazzi equation, and the symmetry of $S$ give
\[
d\alpha(Y,Z)=(\nabla_YS)(Z)-(\nabla_ZS)(Y)+\bigl(\langle S(Y),Z\rangle-\langle S(Z),Y\rangle\bigr)x=0.
\]
Since $S^2$ is simply connected, there is a smooth map $F:S^2\to\R^3$ such that $dF=S$. After translating $F$,
\[
\int_{S^2}F\,d\omega=0.
\]
Set $u(x)=F(x)\cdot x$. Since $dF(Y)$ is tangent, $du(Y)=F\cdot Y$, and hence
\[
F=\nabla u+ux.
\]
Differentiating in $\R^3$ gives
\[
dF(Y)=\nabla_Y\nabla u+uY,
\]
so $Au=S$.

It remains to prove that $u>0$. Fix $q\in S^2$. For $p\ne q$, let $\sigma:[0,\theta]\to S^2$ be a minimizing unit-speed geodesic from $p$ to $q$, where $\theta=d(p,q)$. The tangential projection of $q$ at $\sigma(t)$ is $\sin(\theta-t)\dot\sigma(t)$, so
\[
u(q)-F(p)\cdot q=(F(q)-F(p))\cdot q=\int_0^\theta\sin(\theta-t)S_{\sigma(t)}(\dot\sigma,\dot\sigma)\,dt>0.
\]
Integrating in $p$ and using the normalization of $F$ gives
\[
4\pi u(q)=\int_{S^2}\bigl(u(q)-F(p)\cdot q\bigr)\,d\omega(p)>0.
\]
This completes the proof. 
\end{proof}
We can now prove the main result:
\begin{proof}[Proof of \Cref{pro:smooth-spherical}]
Since $\operatorname{div}T=0$,
\[
\nabla_i(\operatorname{cof}T)_{jk}-\nabla_j(\operatorname{cof}T)_{ik}=g_{jk}(\operatorname{div}T)_i-g_{ik}(\operatorname{div}T)_j=0.
\]
Thus $\operatorname{cof}T$ is positive definite and Codazzi. By Lemma~\ref{lem:potential}, there is a smooth function $u>0$ such that $Au=\operatorname{cof}T$.

Set
\[
d\mu=\frac{2\det T}{u}\,d\omega,\qquad Lf=\frac{u}{2\det T}\,T:Af.
\]
Integration by parts gives
\[
q_T(f,\psi)=-\int_{S^2}f\,T:A\psi\,d\omega=-\langle f,L\psi\rangle_{L^2(\mu)}.
\]
Thus $L$ is elliptic and self-adjoint. Moreover, $T:\operatorname{cof}T=2\det T$ gives $Lu=u$, and the index of $q_T$ is therefore the total multiplicity of the positive eigenvalues of $L$.

Let $Lf=\lambda f$ with $\lambda>0$ and write $f=uv$. After replacing $f$ by $-f$, we may assume that $\max_{S^2}v>0$. Expanding $A(uv)$ and using $Au=\operatorname{cof}T$ gives
\[
u\,T:\nabla^2v+2T(\nabla u,\nabla v)=2(\lambda-1)(\det T)v.
\]
At a positive maximum of $v$, the left-hand side is nonpositive, and hence $\lambda\leq1$. If $\lambda=1$, the strong maximum principle gives that $v$ is constant. Thus every positive eigenvalue is at most $1$, and the $1$-eigenspace is spanned by $u$.

For every smooth $f$, the tensor $Af$ is Codazzi, so $\operatorname{cof}(Af)$ is divergence-free. Using $\operatorname{cof}B:\operatorname{cof}C=B:C$ and $\operatorname{cof}B:B=2\det B$, together with integration by parts, we obtain
\[
\begin{aligned}
\langle f,Lf\rangle_{L^2(\mu)}
&=\int_{S^2}f\,T:Af\,d\omega\\
&=\int_{S^2}f\,\operatorname{cof}(Af):Au\,d\omega\\
&=\int_{S^2}u\,\operatorname{cof}(Af):Af\,d\omega\\
&=2\int_{S^2}u\det(Af)\,d\omega.
\end{aligned}
\]
If $B$ is symmetric and $a,b$ are the eigenvalues of $T^{1/2}BT^{1/2}$, then
\begin{equation}\label{eq:matrix-ineq-TB}
    (T:B)^2-4\det T\det B=(a-b)^2\geq0.
\end{equation}
Indeed $T^{1/2}BT^{1/2}$ is symmetric so $a,b\in \RR$. Since trace is cyclic we find
\[
a+b=\tr (T^{1/2} B T^{1/2}) = \tr(TB )  = T:B. 
\]
Similarly using the multiplicative property of determinant we have
\[
ab = \det T \det B. 
\]

Applying \eqref{eq:matrix-ineq-TB} with $B=Af$ yields
\[
\begin{aligned}
\|Lf\|_{L^2(\mu)}^2-\langle f,Lf\rangle_{L^2(\mu)}
&=\int_{S^2}\frac{u}{2\det T}\left((T:Af)^2-4\det T\det(Af)\right)\,d\omega\\
&\geq0.
\end{aligned}
\]
For an eigenfunction with eigenvalue $\lambda>0$, this gives
\[
0\leq(\lambda^2-\lambda)\|f\|_{L^2(\mu)}^2,
\]
and hence $\lambda\geq1$. Therefore $1$ is the unique positive eigenvalue of $L$ and is simple, so $q_T$ has index one.
\end{proof}

\begin{remark}\label{rem:minkowski}
We recall here an alternative approach from \cite[Remark 3.2]{ShenfeldVanHandel2019}. Set $B=-q_T$, and let
$K$ have support function $u$ ($K$ may be obtained as the convex hull of $F(S^2)$ for $F$ defined as in \Cref{lem:potential}). For support functions $h,k$, the mixed-volume
formula gives
\[
B(h,k)=6V(K_h,K_k,K),
\]
where $K_h$ denotes the body with support function $h$. Thus
$B(u,u)=6\operatorname{Vol}(K)>0$. If $B(f,u)=0$, choose $c\gg1$ so that
$h=f+cu$ is a support function. Minkowski's quadratic inequality gives
\[
B(h,u)^2\geq B(h,h)B(u,u).
\]
Expanding gives $B(f,f)\leq0$. Hence $B$ has one positive direction, and $q_T$ has index one.
\end{remark}

\begin{lemma}\label{lem:tensor-approximation} Let $T\in L^1(S^2)$ be symmetric, nonnegative, and distributionally divergence-free. Then there are smooth positive definite symmetric divergence-free tensors $T_\varepsilon$ such that $T_\varepsilon\to T$ in $L^1(S^2)$. 
\end{lemma}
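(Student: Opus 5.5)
The plan is to smooth $T$ in a way that commutes with the divergence, and then add a small multiple of the metric to get positivity. Convolution in a chart would destroy the divergence-free condition, so I would instead average over the rotation group $SO(3)$.

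Let $\rho_\varepsilon$ be a smooth nonnegative mollifier on $SO(3)$ with total mass one, supported in an $\varepsilon$-neighborhood of the identity. Define
\[
\tilde T_\varepsilon=\int_{SO(3)}\rho_\varepsilon(R)\,(R_*T)\,dR,
\]
where $R_*T$ is the pushforward of the tensor field $T$ by the isometry $R$.

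The key steps, in order, are as follows.

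\textbf{Step 1: basic properties.} Each $R_*T$ is symmetric and nonnegative, and both properties are preserved by averaging with a nonnegative weight. The maps $R$ are isometries of the round metric, so they commute with $\operatorname{div}$. Pairing the distributional identity with a test vector field $V$ gives
\[
\langle\operatorname{div}\tilde T_\varepsilon,V\rangle=\int\rho_\varepsilon(R)\,\langle\operatorname{div}T,R^*V\rangle\,dR=0.
\]

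\textbf{Step 2: smoothness.} $\tilde T_\varepsilon$ is smooth because $SO(3)$ acts transitively on $S^2$. Concretely, in local coordinates the averaging is an integral of $T$ against a kernel obtained by pushing the smooth density $\rho_\varepsilon$ forward along the orbit fibration $SO(3)\to S^2$. Since $T$ has to be transported between tangent spaces, this is a matrix-valued kernel $K_\varepsilon(x,y)$ mapping symmetric tensors at $y$ to those at $x$. It is smooth in $x$ because $\rho_\varepsilon$ is smooth and $SO(3)\to S^2$ is a submersion, so its fiber integrals depend smoothly on the base point.

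\textbf{Step 3: $L^1$ convergence.} Translation by isometries is strongly continuous on $L^1$ tensor fields, i.e. $R_*T\to T$ in $L^1$ as $R\to\mathrm{id}$. This is proved by approximating $T$ by continuous tensor fields. Minkowski's integral inequality then gives $\tilde T_\varepsilon\to T$ in $L^1(S^2)$.

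\textbf{Step 4: positivity.} Set
\[
T_\varepsilon=\tilde T_\varepsilon+\varepsilon g.
\]
The metric is parallel, so $\operatorname{div}g=0$ and $T_\varepsilon$ is still divergence-free. It is positive definite because $\tilde T_\varepsilon\ge0$, and $T_\varepsilon\to T$ in $L^1$ as well.

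The main obstacle is Step 2, the smoothness claim. The issue is that $R_*T$ involves transporting tensors between tangent spaces via $dR$, so this is not scalar convolution. To make it rigorous I would lift $T$ to an $SO(3)$-equivariant function on the frame bundle, which is identified with $SO(3)$ itself. On the group, left convolution with $\rho_\varepsilon$ is genuinely smooth and commutes with the right-action structure, so the result descends to a smooth tensor on $S^2$.

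An alternative avoids frames altogether: regard $T$ as an $\mathbb{R}^{3\times3}$-valued function via the embedding $TS^2\subset\mathbb{R}^3$, for which
\[
(R_*T)(x)=R\,T(R^{-1}x)\,R^{T}.
\]
Averaging this expression is then a convolution of a vector-valued function on the homogeneous space, which is smooth by differentiating under the integral after the substitution $R\mapsto RS$. The result remains tangent-valued and symmetric at each point.
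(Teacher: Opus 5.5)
Your proposal is correct and is essentially the paper's proof. The paper also averages $R_*T$ against a nonnegative approximate identity on $\mathrm{SO}(3)$, which preserves symmetry, nonnegativity and distributional divergence-freeness and converges in $L^1$ by strong continuity of the rotation action, and then adds $\varepsilon g$ for positive definiteness. Your extra justification of smoothness (write $x=Sx_0$ and substitute $R=SR'$, so all $x$-dependence falls on the smooth factors $\rho_\varepsilon(SR')$ and $S(\cdot)S^{T}$) supplies detail the paper simply asserts.
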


\begin{proof} 
Let $\eta_\varepsilon$ be a nonnegative smooth approximate identity on $\mathrm{SO}(3)$ with respect to Haar probability measure. For $R\in\mathrm{SO}(3)$, set $(R_*T)_x=R\,T_{R^{-1}x}R^{-1}$ and define
\[
T_\varepsilon=\int_{\mathrm{SO}(3)}R_*T\,\eta_\varepsilon(R)\,dR+\varepsilon g.
\]
The convolution is smooth, rotations preserve symmetry, nonnegativity, and divergence-freeness, and $\varepsilon g$ makes $T_\varepsilon$ positive definite. Moreover,
\[
\|T_\varepsilon-T\|_{L^1}\leq\int_{\mathrm{SO}(3)}\|R_*T-T\|_{L^1}\eta_\varepsilon(R)\,dR+\varepsilon\|g\|_{L^1}\to 0.
\]
This completes the proof. 
\end{proof}

\begin{proof}[Proof of \Cref{thm:spherical}] By Proposition~\ref{prop:transfer} and Lemma~\ref{lem:divfree}, the tensor $T\in L^1(S^2)$ is symmetric, nonnegative, nonzero, and distributionally divergence-free. Let $T_\varepsilon$ be given by Lemma~\ref{lem:tensor-approximation}. By Proposition~\ref{pro:smooth-spherical},
\[
\operatorname{ind}(q_{T_\varepsilon})=1.
\]

Suppose that $q_T$ is negative definite on a two-dimensional subspace $E\subset C^\infty(S^2)$, and let $f_1,f_2$ be a basis of $E$. Since $T_\varepsilon\to T$ in $L^1(S^2)$,
\[
\bigl(q_{T_\varepsilon}(f_i,f_j)\bigr)_{i,j=1}^2\longrightarrow\bigl(q_T(f_i,f_j)\bigr)_{i,j=1}^2.
\]
The matrix on the right is negative definite, so the matrix on the left is negative definite for sufficiently small $\varepsilon$, contradicting $\operatorname{ind}(q_{T_\varepsilon})=1$. Hence $\operatorname{ind}(q_T)\leq1$.

Finally, since $T$ is nonnegative and nonzero,
\[
q_T(1)=-\int_{S^2}\tr T\,d\omega<0.
\]
Thus $\operatorname{ind}(q_T)=1$. 
\end{proof}

\subsection{Conclusion}

\begin{proof}[Proof of Theorem~\ref{thm:main}] By Lemma~\ref{lem:index-transfer} and Theorem~\ref{thm:spherical},
\[
\operatorname{ind}(Q)\leq\operatorname{ind}(q_T)=1.
\]

If $\int_{\partial\Sigma}f\,ds=0$, then $Q(1,f)=0$. Since $Q(1)=-|\partial\Sigma|<0$, we have $Q(f)\geq0$, for otherwise $Q$ would be negative definite on $\operatorname{span}\{1,f\}$. Thus the Rayleigh characterization gives $\sigma_1(\Sigma)\geq1$. Together with \eqref{eq:upper}, this yields
\[
\sigma_1(\Sigma)=1.
\]
This completes the proof. 
\end{proof}
\appendix

\section{The radial projection}\label{appendix}

\begin{proof}[Proof of Proposition~\ref{prop:radial}]
By Nitsche's theorem \cite{Nitsche1985}, $\partial\Sigma$ has at least two components. It follows from \cite[Corollary~4.2(i)]{KusnerMcGrath2024} that $0\notin\Sigma$ and every ray from the origin meets $\Sigma^\circ$ transversely in at most one point. This property gives injectivity of $n$ on $\Sigma^\circ$, while transversality and $\ker dn_p=T_p\Sigma\cap\R X(p)$ show that $n$ is a local diffeomorphism there.

Since $n=X$ on $\partial\Sigma$, it is injective on $\partial\Sigma$. Suppose that $n(p)=n(q)$ for some $p\in\Sigma^\circ$ and $q\in\partial\Sigma$. Choose a neighborhood $U$ of $p$ with $\overline{U}\subset\Sigma^\circ$ such that $n|_U:U\to W$ is a diffeomorphism onto an open neighborhood $W$ of $n(p)$. Choose $q_j\in\Sigma^\circ\setminus\overline{U}$ with $q_j\to q$. Since $n(q_j)\to n(q)=n(p)$, we have $n(q_j)\in W$ for large $j$, and hence $p_j=(n|_U)^{-1}(n(q_j))\in U$ satisfies $n(p_j)=n(q_j)$. This contradicts injectivity on $\Sigma^\circ$, and therefore $n$ is injective on $\Sigma$.
\end{proof}

\begin{proof}[Proof of Lemma~\ref{lem:density}]
It is enough to approximate $f\in C^\infty(\Sigma)$. Let $\Gamma=n(\partial\Sigma)$. Since $n=X$ on $\partial\Sigma$, the restriction $n|_{\partial\Sigma}$ is a diffeomorphism onto the smooth embedded $1$-manifold $\Gamma$. Thus $f\circ(n|_{\partial\Sigma})^{-1}$ extends to some $\phi_0\in C^\infty(S^2)$. Since $r=1$ on $\partial\Sigma$, the function $h=f-r(\phi_0\circ n)$ vanishes on $\partial\Sigma$.

Let $d$ be the distance to $\partial\Sigma$ in a boundary collar, and choose $\chi_\varepsilon\in C^\infty(\Sigma)$ equal to $0$ where $d\leq\varepsilon$ and to $1$ outside $\{d<2\varepsilon\}$. Since $h=O(d)$ and $|\nabla\chi_\varepsilon|=O(\varepsilon^{-1})$, the functions $h_\varepsilon=\chi_\varepsilon h$ belong to $C_c^\infty(\Sigma^\circ)$ and satisfy $h_\varepsilon\to h$ in $H^1(\Sigma)$.

Set $\Omega=n(\Sigma^\circ)$. By Proposition~\ref{prop:radial}, $r>0$ on $\Sigma$ and $n:\Sigma^\circ\to\Omega$ is a diffeomorphism. Hence $(h_\varepsilon/r)\circ n^{-1}\in C_c^\infty(\Omega)$, and its extension by zero defines a function $\phi_\varepsilon\in C^\infty(S^2)$. Since $n$ is injective on $\Sigma$, we have $\Gamma\cap\Omega=\varnothing$, and therefore $r(\phi_\varepsilon\circ n)=h_\varepsilon$ on $\Sigma$. Consequently,
\[
r\bigl((\phi_0+\phi_\varepsilon)\circ n\bigr)=r(\phi_0\circ n)+h_\varepsilon\longrightarrow f
\]
in $H^1(\Sigma)$.
\end{proof}

\printbibliography

\end{document}